\newcommand{\p}{\partial}
\newtheorem{theorem}{Theorem}
\newtheorem{corollary}{Corollary}[section]
\newtheorem{remark}{Remark}
\newcommand{\wt}{\widetilde}
\begin{document}

\title{Carleman estimate on a tree for Schr\"{o}dinger equation and application to an inverse problem}

\author{Yibin Ding
	\thanks{School of Mathematical Sciences, Zhejiang University, Hangzhou, 310027, China. Email: dybmath@yeah.net}
	\and
	Xiang Xu
	\thanks{School of Mathematical Sciences, Zhejiang University, Hangzhou, 310027, China. Email: xxu@zju.edu.cn}
}
  \date{} 
\maketitle
\begin{abstract}
In this paper, by constructing the weight functions, a global Carleman estimate for the Schr\"{o}dinger equation on a tree is established, with a strong assumption on the solution. And the estimate is able to be applied to derive the Lipschitz stability for an inverse coefficients problem.
\end{abstract}

\textbf{Keywords and phrases:} {Carleman estimate, Schr\"{o}dinger equation, Inverse problem, Quantum graph}

\section{Introduction}

Inspired by previous work, the Carleman estimate of the transmission line equations on a tree-shaped network, a Carleman estimate for Schr\"{o}dinger equations on a tree-shaped network is given in this work.
For more works about Carleman estimates, we refer readers to \cite{MR2126149,Y001,MR3729280,MR3971248}.
For this method to Schr\"{o}dinger equations, we refer readers to \cite{MR1955903,MR2040524,MR2061430,MR2672251,IPR001,MR3032869,MR3041540}. 
Ignat, Pazoto, and Rosier\cite{IPR001} had given the Carleman estimate for Schr\"{o}dinger equations on a star-shaped network, while their result requires \(N\) groups of weight functions for a \(N\)-edges star-shaped network. For the inverse problem, their result requires observations on all boundary vertexes. 
However, due to their weight functions, their method is hard to reduce the number of weight functions used in the estimate and the number of observation boundary vertexes required in the inverse problem.\par
To get a Carleman estimate for Schr\"{o}dinger equations on a tree-shaped network, the main  difficulty is how to deal with the term derived from inner vertex conditions, which is solved by constructing a group of weight functions and assuming the form of the solution \(u_j=U_j(x) e^{-i\omega t}\) in this work.
And for the inverse problem, a lack of observation on one boundary vertex does not affect the stability result.
This paper is organized as follows. In Section \ref{section.np}, notations are introduced.
In Section \ref{schro.result.sec}, the Carleman estimate of Schr\"{o}dinger equations on a tree-shaped network and the stability of the inverse coefficient problem will be presented.
And in the final section, the main theorem will be proved.
\section{Notation}\label{section.np}
Let \(\Lambda\) be a tree-shaped graph with \(N\) edges \(\{E_j\}_{j\in \mathcal{N}}\) and \(N+1\) vertexes \(\{V_j\}_{j\in\mathcal{N}^0}\), where \(\mathcal{N}=\{1,\dots,N\}\) and \(\mathcal{N}^0=\mathcal{N}\cup\{0\}\).
Without loss of generality, set a boundary vertex, who only connects one edge, as the root vertex \(V_0\). 
For any point \(A\in\Lambda\), set the length of the path from \(V_0\) to itself as its coordinate \(x(A)\). 
If \(V_{k_1}\) and \(V_{k_2}\), where \(k_1,k_2\in\mathcal{N}^0\), are two vertexes of a edge \(E_j,\ j\in\mathcal{N}\) and have \(x(V_{k_1})<x(V_{k_2})\), call \(V_{K_1}\) and \(V_{k_2}\) the initial node \(I_j\) and the terminal node \(T_j\) of \(E_j\) respectively.\par
Let \(S_{I,k}=\{j\in\mathcal{N}:\chi_{V_k}(I_j)=1\}\) be the set of index numbers of edges whose initial node is \(V_k\) and \(S_{T,k}=\{j\in\mathcal{N}:\chi_{V_k}(T_j)=1\}\) be the set of index numbers for edges whose terminal node is \(V_k\). 
Here, \(\chi_{k}(P)=
\begin{cases}
1,\ P\ \text{is}\ V_k,\\
0,\ P\ \text{is not}\ V_k,\\
\end{cases},\ P\in\Lambda,\ k\in\mathcal{N}^0
\).
Let \(\Pi_1=\{k\in\mathcal{N}^0:|S_{I,k}|+|S_{T,k}|=1\}\) be the set of index numbers of boundary vertexes and \(\Pi_2=\{k\in\mathcal{N}^0:|S_{I,k}|+|S_{T,k}|>1\}\) be the set of index numbers of inner vertexes.\par
The interval on an edge \(E_j\) is set as \(\mathcal{I}_j=(x(I_j),x(T_j))\). 
We also introduce a set \(\mathcal{U}(M):=\bigoplus_{j=1}^N \{p\in L^\infty(\mathcal{I}_j);\|p\|_{L^\infty(\mathcal{I}_j)}\leq M\}\), where \(M\) is a constant.
\section{Main result}\label{schro.result.sec}
\subsection{Carleman estimate}
The Schr\"{o}dinger equation on a tree-shaped network is represented as 
\begin{gather}\label{Schro.Original.equation}
i\p_t u_j+\p_x^2 u_j+p_ju_j=f_j,\ j=1,2,\dots,N.
\end{gather}
Here, \(i=\sqrt{-1}\).
On inner vertexes, \(\{u_j\}_{j=1}^N\) meets the Kirchhoff's law:
\begin{gather}\label{Schro.Inner.Cond}
\begin{cases}
u_{j_1}(x(V_k),t)=u_{j_2}(x(V_k),t),\ j_1,j_2\in S_{T,k}\cup S_{I,k},\\
\sum_{j\in S_{T,k}}\p_x u_j(x(V_k),t)=\sum_{j\in S_{I,k}}\p_x u_j(x(V_k),t),
\end{cases}
k\in\Pi_2.
\end{gather}
Firstly, a Carleman estimate of system \eqref{Schro.Original.equation} with the following initial and boundary conditions will be given:
\begin{gather}\label{Schro.Boundary}
\begin{cases}
u_j(x(V_0),t)=0,\ j\in S_{I,0},\ t\in[-T,T],\\
u_j(x(V_k),t)=0,\ j\in S_{T,k},\ k\in\Pi_1\backslash\{0\},\ t\in[-T,T].
\end{cases}
\end{gather}
To get the estimate, set a group of weight functions as \(\varphi_j=\theta\psi_j,\ \theta=\frac{1}{(T-t)(T+t)},\ \psi=a_j x^2+b_j x+c_j,\ j\in\mathcal{N}\) to satisfy following conditions:
\begin{gather}\label{schro.weight.c1}
\begin{cases}
\varphi_{j_1}(x(V_k),t)=\varphi_{j_2}(x(V_k),t),\ j_1,j_2\in S_{T,k}\cup S_{I,k},\\
\p_x \varphi_{j_1}(x(V_k),t)=|S_{I,k}|\p_x \varphi_{j_2}(x(V_k),t),\ j_1\in S_{T,k},\ j_2\in S_{I,k},\\
\p_x^2\varphi_{j_1}(x(V_k),t)=|S_{I,k}|^2\p_x^2\varphi_{j_2}(x(V_k),t),\ j_1\in S_{T,k},\ j_2\in S_{I,k},
\end{cases}
k\in\Pi_2,
\end{gather}
and
\begin{gather}\label{schro.weight.c2}
\varphi_j< 0,\ 
\p_x\varphi_j> 0,\ 
\p_x^2\varphi_j>0,\ 
j=1,2,\dots,N.
\end{gather}

\begin{remark}
A group of weight functions, who meet the above conditions, exist. 
A way to construct this group is introduced and an example will be given.
\par
To set \(a_{j_2},\ b_{j_2},\ c_{j_2},\ {j_2}\in S_{I,k}\), a linear equation can be derived from \eqref{schro.weight.c1} on one inner vertex \(V_k,\ k\in\Pi_2\):
\begin{gather*}
\begin{cases}
a_{j_2} x(V_k)^2+b_{j_2} x(V_k)+c_{j_2}=a_{j_1} x(V_k)^2+b_{j_1} x(V_k)+c_{j_1},\\
|S_{I,k}|(2a_{j_2} x(V_k)+b_{j_2})=2a_{j_1} x(V_k)+b_{j_1},\\
|S_{I,k}|^2 a_{j_2} =a_{j_1},
\end{cases}
j_1\in S_{T,k},\ {j_2}\in S_{I,k}.
\end{gather*}
Thus,  \( a_{j_2}=\frac{a_{j_1}}{|S_{I,k}|^2},\ b_{j_2}=\frac{2(|S_{I,k}|-1)x(V_k)a_{j,1}}{|S_{I,k}|^2}+\frac{b_{j_1}}{|S_{I,k}|},\ c_{j_2}=\frac{(|S_{I,k}|-1)^2x(V_k)^2 a_{j,1}}{|S_{I,k}|^2}+\frac{(|S_{I,k}|-1)x(V_k)b_{j,1}}{|S_{I,k}|}+c_{j,1}\), \(j_1\in S_{T,k},\ j_2\in S_{I,k},\ k\in\Pi_2\). \(-\frac{b_{j_1}}{2a_{j_1}}<0)\) gives \(-\frac{|S_{I,k}|b_{j_1}}{2a_{j_1}}-(|S_{I,k}|-1)x(V_k)<0<x(I_{j_2})\), thus, setting \(-\frac{b_{j}}{2a_{j}}<x(I_{j})=x(V_0)=0,\ j\in S_{I,0}\) is enough to meet the second term of \eqref{schro.weight.c2}.
Obviously, it is easy to select \(c_j<0,\ j\in S_{I,0}\) to meet the first term of \eqref{schro.weight.c2}, which means
\begin{gather*}
\lim_{t\rightarrow -T}\theta^l e^{2s\varphi_j}=\lim_{t\rightarrow T}\theta^l e^{2s\varphi_j}=0,\ \forall l\in \mathbb{N},\ j=1,2,\dots,N.
\end{gather*}
\begin{figure}[!ht]\centering
\includegraphics[width=0.5\textwidth]{./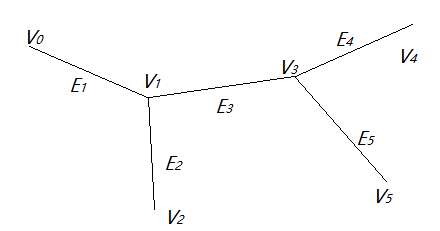}
\caption{An example with 5 edges.}
\label{fig.02}
\end{figure} 
An example is presented on a 5-edges tree in Figure \ref{fig.02} with \(l_j=1,\ j=1,2,\dots,5\). It is easy to set
\begin{gather*}
\begin{cases}
\varphi_1=(x^2+2x-7)\theta,\ x\in\mathcal{I}_1=(0,1),\\
\varphi_2=(\frac{1}{4}x^2+\frac{3}{2}x-\frac{23}{4})\theta,\ x\in\mathcal{I}_2=(1,2),\\
\varphi_3=(\frac{1}{4}x^2+\frac{3}{2}x-\frac{23}{4})\theta,\ x\in\mathcal{I}_3=(1,2),\\
\varphi_4=(\frac{1}{16}x^2+x-4)\theta,\ x\in\mathcal{I}_4=(2,3),\\
\varphi_5=(\frac{1}{16}x^2+x-4)\theta,\ x\in\mathcal{I}_5=(2,3),
\end{cases}
\end{gather*}
which meet conditions \eqref{schro.weight.c1} and \eqref{schro.weight.c2}.
\end{remark}

\begin{theorem}\label{Schro.Carleman.estimate}
Assume that \(\omega\geq 0\), \(f_j=F_j(x) e^{-i( \omega t+\phi)},\ j\in\mathcal{N}\) and \(\{p\}_{j=1}^N\in \mathcal{U}(M)\). If \(\{u_j=U_j(x) e^{-i(\omega t+\phi)}\}_{j=1}^N\in \bigoplus_{j=1}^N H^1(-T,T;H^2(\mathcal{I}_j))\) is a solution of \eqref{Schro.Original.equation} with \eqref{Schro.Inner.Cond}\eqref{Schro.Boundary}, then there exists a constant \(s_0\geq0\), for all \(s>s_0\), we have
\begin{gather*}
\sum_{j=1}^N \int_{Q_j} (s^3\theta^3|u_j|^2+s\theta|\p_x u_j|^2) e^{2s\varphi_j}dxdt
\leq
C\left(\sum_{j=1}^N \int_{Q_j} |f_j|^2 e^{2s\varphi_j}dxdt+B\right),
\end{gather*}
Here, \(Q_j=\mathcal{I}_j\times(-T,T),\ j\in\mathcal{N}\) and
\begin{align*}
B=&\sum_{k\in\Pi_1}\int_{-T}^T\sum_{j\in S_{T,k}}   \theta|\p_x u_j|^2 e^{2s\varphi_j} 
-\sum_{j\in S_{I,k}} \theta |\p_x u_j|^2 e^{2s\varphi_j} dt\Big|_{x=x(V_k)}
\end{align*}
and \(C=C(\Lambda,M,T,s_0,\{\varphi_j\}_{j=1}^N)\) is independent on \(\{{p}_j\}_{j=1}^N,\ s\).
\end{theorem}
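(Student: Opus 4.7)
The plan is to adapt the classical self-adjoint / skew-adjoint decomposition of the conjugated Schr\"odinger operator to the tree $\Lambda$. On each edge $E_j$, set $w_j := e^{s\varphi_j} u_j$ and write $L_s w_j := e^{s\varphi_j}(i\partial_t + \partial_x^2)(e^{-s\varphi_j} w_j) = P_{+,j} w_j + P_{-,j} w_j$, with the formally self-adjoint part $P_{+,j} w_j := i\partial_t w_j + \partial_x^2 w_j + s^2(\partial_x\varphi_j)^2 w_j$ and the formally skew-adjoint part $P_{-,j} w_j := -2s(\partial_x\varphi_j)\partial_x w_j - s(\partial_x^2\varphi_j) w_j - is(\partial_t\varphi_j) w_j$. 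Since $L_s w_j = e^{s\varphi_j}(f_j - p_j u_j)$, the identity $\|L_s w_j\|^2 = \|P_{+,j} w_j\|^2 + \|P_{-,j} w_j\|^2 + 2\mathrm{Re}(P_{+,j} w_j, P_{-,j} w_j)$ reduces the task to producing a favorable lower bound on the cross term.

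Next, I expand $2\mathrm{Re}(P_{+,j} w_j, P_{-,j} w_j)$ into nine pieces and integrate by parts in $x$ on $\mathcal{I}_j$ and in $t$ on $(-T,T)$. The choice $\theta = ((T-t)(T+t))^{-1}$ together with $\psi_j<0$ makes $\theta^{\ell} e^{2s\varphi_j}\to 0$ as $t\to\pm T$ for every $\ell$, so all temporal boundary contributions vanish. In the interior, $\partial_x\psi_j>0$ and $\partial_x^2\psi_j>0$ produce the positive leading bulk terms $6s^3\psi_{j,x}^2\psi_{j,xx}\theta^3|w_j|^2$ (from the $(s^2\varphi_x^2 w,\,-2s\varphi_x\partial_x w)$ pairing) and $2s\psi_{j,xx}\theta|\partial_x w_j|^2$ (from $(\partial_x^2 w,\,-2s\varphi_x\partial_x w)$), which after undoing the conjugation reproduce $s^3\theta^3|u_j|^2 e^{2s\varphi_j}$ and $s\theta|\partial_x u_j|^2 e^{2s\varphi_j}$ on the left. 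The remaining bulk pieces---chiefly $s^2\varphi_{j,t}^2|w_j|^2 \sim s^2\theta^4|w_j|^2$ from $(i\partial_t w,\,-is\varphi_t w)$, together with the $p_j u_j$ and $\omega u_j$ terms appearing through $L_s w_j = e^{s\varphi_j}(f_j-p_j u_j)$---are absorbed into the leading ones for $s\geq s_0$ by Young's inequality, using $\|p_j\|_{L^\infty}\leq M$ and $\theta\geq T^{-2}$. The ansatz $u_j = U_j(x) e^{-i(\omega t+\phi)}$ enters first here by turning $i\partial_t u_j$ into $\omega u_j$, making the $\omega$ contribution manifestly lower-order.

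The main obstacle, and the reason the conditions \eqref{schro.weight.c1} are imposed, is the control of the spatial boundary terms that the $x$-integration by parts deposits at every vertex. At a boundary vertex $V_k\in\Pi_1$, the Dirichlet condition \eqref{Schro.Boundary} kills all $|w_j|^2$ and $w_j\partial_x w_j$ contributions and leaves only pieces of the form $\pm 2s\psi_{j,x}\theta|\partial_x u_j|^2 e^{2s\varphi_j}$ evaluated at $x(V_k)$, whose signs (positive at the leaves, negative at $V_0$) match those in $B$ and which therefore assemble into $CB$ on the right-hand side. At an inner vertex $V_k\in\Pi_2$, the tree structure forces $|S_{T,k}|=1$, and the continuity condition in \eqref{Schro.Inner.Cond} makes $u_j(x(V_k),t)$ common across all incident edges; the three relations in \eqref{schro.weight.c1} then force $\varphi_j(x(V_k))$ to be common and give the fixed ratios $|S_{I,k}|:1$ and $|S_{I,k}|^2:1$ between the unique terminal edge and each initial edge for $\partial_x\varphi_j$ and $\partial_x^2\varphi_j$ at $V_k$. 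Combining these with Kirchhoff's law $\partial_x u_{j_T} = \sum_{j\in S_{I,k}}\partial_x u_j$, and exploiting that the ansatz cancels the common time phase $e^{-i(\omega t+\phi)}$ in every pointwise product at the vertex so that Cauchy--Schwarz can be applied uniformly in $t$, the sum of the three families of inner-vertex boundary contributions (in $|w_j|^2$, in $w_j\partial_x w_j$, and in $|\partial_x w_j|^2$) turns out to be sign-definite (non-positive) and hence droppable. Orchestrating these three cancellations simultaneously through the single algebraic scheme \eqref{schro.weight.c1} is the delicate step. Once it is settled, summing over edges, absorbing the lower-order bulk for $s\geq s_0$, and transforming back to $u_j$ yields the stated estimate with $C = C(\Lambda, M, T, s_0, \{\varphi_j\}_{j=1}^N)$.
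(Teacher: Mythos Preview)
Your overall architecture (conjugation $w_j=e^{s\varphi_j}u_j$, self/skew decomposition, integration by parts, absorbing lower order for $s$ large) is the same as the paper's, but the handling of the inner-vertex boundary terms has a genuine gap: you sum the edge contributions with unit weights, whereas the paper multiplies the estimate on each edge by a carefully chosen constant $\eta_j$ (defined by $\eta_j=1$ for $j\in S_{I,0}$ and $\eta_{j_2}=|S_{I,k}|^2\eta_{j_1}$ whenever $j_1\in S_{T,k}$, $j_2\in S_{I,k}$) \emph{before} summing. These $\eta_j$ are precisely what makes the inner-vertex terms have the right sign.

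Concretely, the $|\p_x w_j|^2$ contribution at an inner vertex $V_k$ (your analogue of the paper's $D_{k,2}$) is, with unit weights and using $\p_x\varphi_{j_1}=|S_{I,k}|\,\p_x\varphi_{j_2}$,
\[
-2s\,\p_x\varphi_{j_1}\Bigl[\,|\p_x w_{j_1}|^2-|S_{I,k}|^{-1}\!\!\sum_{j_2\in S_{I,k}}|\p_x w_{j_2}|^2\Bigr],
\]
and Kirchhoff $\p_x w_{j_1}=\sum_{j_2}\p_x w_{j_2}$ together with Cauchy--Schwarz only gives $|\p_x w_{j_1}|^2\le |S_{I,k}|\sum_{j_2}|\p_x w_{j_2}|^2$, which is the wrong direction; indeed the bracket can take either sign (try all $\p_x w_{j_2}$ equal, versus two of them opposite). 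Likewise the $s^3(\p_x\varphi_j)^3|w_j|^2$ contribution (your analogue of $D_{k,4}$) equals $-2s^3|w|^2(\p_x\varphi_{j_1})^3\bigl(1-|S_{I,k}|^{-2}\bigr)$ with unit weights, which is strictly of the wrong sign when $|S_{I,k}|\ge 2$ and is a trace term you cannot absorb into the bulk. With the $\eta_j$ multipliers, the product $\eta_j\p_x\varphi_j$ scales as $|S_{I,k}|$ across $V_k$ and $\eta_j(\p_x\varphi_j)^3$ scales as $|S_{I,k}|^{-1}$, which is exactly what makes $D_{k,2}\le 0$ via Cauchy--Schwarz and forces $D_{k,4}=0$. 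Since the $\eta_j$ are fixed positive constants depending only on $\Lambda$, they disappear into $C$ at the end.

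A second, smaller point: the role of the ansatz $u_j=U_j(x)e^{-i(\omega t+\phi)}$ is not merely to turn $i\p_t u_j$ into a zero-order bulk term. Its essential use is at the inner vertices, where the cross pairing $(i\p_t w_j,\,-2s\p_x\varphi_j\p_x w_j)$ leaves a trace of the form $s\,\p_x\varphi_j\,\mathrm{Im}(w_j\overline{\p_t w_j})$ (the paper's $D_{k,1}$, written as $w_{j,1}\p_t w_{j,2}-w_{j,2}\p_t w_{j,1}$). This term has no reason to be sign-definite for a general solution; the ansatz makes $\mathrm{Im}(u_j\overline{\p_t u_j})=\omega\,|U_j|^2\ge 0$, and combined with the $\eta_j$ and $\p_x\varphi_j>0$ it yields $D_{k,1}\le 0$. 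Your phrase ``so that Cauchy--Schwarz can be applied uniformly in $t$'' does not capture this mechanism.
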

The proof is given in the next section.\par
From the definition of \(S_{I,k}\), \(S_{T,k}\) and \(\Pi_1\), we know that \(S_{I,0}\) and \(S_{T,k},\ k\in\Pi_1\backslash\{0\}\) all contain only one number and that \(S_{T,0}\) and \(S_{I,k},\ k\in\Pi_1\backslash\{0\}\) are empty.
It means that 
\begin{align*}
B\leq \sum_{k\in\Pi_1\backslash\{0\}}\int_{-T}^T\sum_{j\in S_{T,k}}   \theta|\p_x u_j|^2 e^{2s\varphi_j} dt\Big|_{x=x(V_k)}.
\end{align*}
Based on the above theorem, it is easy to get the following corollary:
\begin{corollary}\label{Full.Schro.Carleman.estimate}
Assume that \(\omega_m=m\pi/T\geq 0\), \(f_j=\sum_{m=0}^\infty f_{j,m}=\sum_{m=0}^\infty F_{j,m}(x) e^{-i( \omega_m t+\phi_m)},\ j\in\mathcal{N}\) and \(\{p\}_{j=1}^N\in \mathcal{U}(M)\). If \(\{u_j=\sum_{m=0}^\infty u_{j,m}=\sum_{m=0}^\infty U_{j,m}(x) e^{-i(\omega_m t+\phi_m)}\}_{j=1}^N\in \bigoplus_{j=1}^N H^1(-T,T;H^2(\mathcal{I}_j))\) is a solution of \eqref{Schro.Original.equation} with \eqref{Schro.Inner.Cond}\eqref{Schro.Boundary}, then there exists a constant \(s_0\geq0\), for all \(s>s_0\), we have
\begin{gather*}
\sum_{j=1}^N \int_{Q_j} (s^3\theta^3|u_j|^2+s\theta|\p_x u_j|^2) e^{2s\varphi_j}dxdt
\leq
C\left(\sum_{j=1}^N\sum_{m=0}^\infty \int_{Q_j} |f_{j,m}|^2 e^{2s\varphi_j}dxdt+B\right),
\end{gather*}
Here, \(Q_j=\mathcal{I}_j\times(-T,T),\ j\in\mathcal{N}\) and
\begin{align*}
B=&\sum_{k\in\Pi_1\backslash\{0\}}\int_{-T}^T\sum_{j\in S_{T,k}}  \sum_{m=0}^\infty \theta|\p_x u_{j,m}|^2 e^{2s\varphi_j} 
 dt\Big|_{x=x(V_k)}\\
\leq &\frac{1}{T^2}\sum_{k\in\Pi_1\backslash\{0\}}\int_{-T}^T\sum_{j\in S_{T,k}} \sum_{m=0}^\infty  |\p_x u_{j,m}|^2 e^{2s\varphi_j(x,0)}dt\Big|_{x=x(V_k)},\\
=&\frac{1}{T^2}\sum_{k\in\Pi_1\backslash\{0\}}\int_{-T}^T\sum_{j\in S_{T,k}}   |\p_x u_{j}|^2 e^{2s\varphi_j(x,0)}dt\Big|_{x=x(V_k)},\\
\end{align*}
and \(C=C(\Lambda,M,T,s_0,\{\varphi_j\}_{j=1}^N)\) is independent on \(\{{p}_j\}_{j=1}^N,\ s\).
\end{corollary}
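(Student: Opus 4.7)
The plan is to apply Theorem~\ref{Schro.Carleman.estimate} to each Fourier mode $u_{j,m}$ individually and then sum over $m\ge 0$. Because $\omega_m=m\pi/T$, the family $\{e^{-i\omega_m t}\}_{m\ge 0}$ is orthogonal on $L^2(-T,T)$; substituting $u_j=\sum_m u_{j,m}$ and $f_j=\sum_m f_{j,m}$ into \eqref{Schro.Original.equation}--\eqref{Schro.Boundary} and identifying Fourier coefficients term by term, each family $\{u_{j,m}\}_{j=1}^N$ is itself a solution of the same tree system with source $\{f_{j,m}\}_j$ and single time-frequency $\omega_m$. Theorem~\ref{Schro.Carleman.estimate} then applies mode by mode to give, uniformly in $m$,
\begin{equation*}
\sum_{j=1}^N\int_{Q_j}(s^3\theta^3|u_{j,m}|^2+s\theta|\partial_x u_{j,m}|^2)e^{2s\varphi_j}\,dxdt\le C\Bigl(\sum_{j=1}^N\int_{Q_j}|f_{j,m}|^2 e^{2s\varphi_j}\,dxdt+B_m\Bigr),
\end{equation*}
where $B_m$ is the single-mode analogue of $B$. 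Summing over $m$, the source integrals and the $B_m$'s combine directly into the right-hand side of the corollary.

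For the summed left-hand side, I would exploit that $|u_{j,m}(x,t)|^2=|U_{j,m}(x)|^2$ and $|\partial_x u_{j,m}(x,t)|^2=|U'_{j,m}(x)|^2$ are $t$-independent, together with Parseval's identity on $(-T,T)$,
\[
\sum_{m=0}^\infty|U_{j,m}(x)|^2=\frac{1}{2T}\int_{-T}^T|u_j(x,t)|^2\,dt,\qquad \sum_{m=0}^\infty|U'_{j,m}(x)|^2=\frac{1}{2T}\int_{-T}^T|\partial_x u_j(x,t)|^2\,dt,
\]
to identify the summed mode-wise left-hand side with the combined $|u_j|^2$, $|\partial_x u_j|^2$ integrals in the statement. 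For the displayed upper bound on $B$, the key pointwise estimate is $\theta(t)e^{2s\varphi_j(x(V_k),t)}\le T^{-2}e^{2s\varphi_j(x(V_k),0)}$: writing $g(\theta)=\theta e^{2s\theta\psi_j(x(V_k))}$ with $\psi_j(x(V_k))<0$ (from \eqref{schro.weight.c2}), one computes $g'(\theta)=e^{2s\theta\psi}(1+2s\theta\psi)$, which is negative as soon as $\theta>1/(2s|\psi_j(x(V_k))|)$; enlarging $s_0$ so that this threshold lies below $1/T^2$ makes $g$ decreasing on $[1/T^2,\infty)$ and therefore maximized at $\theta=1/T^2$, i.e.\ at $t=0$. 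Integrating in $t$ produces the first inequality of the $B$-chain, and the closing equality follows from a second Parseval application at $x=x(V_k)$ since $e^{2s\varphi_j(x,0)}$ is $t$-independent.

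The main obstacle is the LHS reassembly in the second paragraph: the Carleman weight $s^3\theta^3 e^{2s\varphi_j}$ depends on $t$, so exchanging Parseval's identity with $t$-integration is not automatic. The step succeeds here only because $\sum_m|U_{j,m}(x)|^2$ is itself $t$-independent, allowing the $t$-integral of the weight to factor cleanly from the modal summation; constants must be tracked carefully so as to remain independent of $s$.
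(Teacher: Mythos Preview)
Your overall strategy---apply Theorem~\ref{Schro.Carleman.estimate} mode by mode and sum over $m$---is exactly what the paper intends (it gives no proof and simply says the corollary ``is easy''). Your treatment of the right-hand side is sound: the source integrals add directly, your monotonicity argument for $g(\theta)=\theta e^{2s\theta\psi}$ is correct, and Parseval legitimately applies at the final step of the $B$-chain because the weight $e^{2s\varphi_j(x,0)}$ has been made $t$-independent there.

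The gap is in the LHS reassembly. Summing the mode-wise left sides gives
\[
\sum_m\int_{Q_j} s^3\theta^3|u_{j,m}|^2 e^{2s\varphi_j}\,dxdt=\int_{\mathcal I_j}\Bigl(\sum_m|U_{j,m}(x)|^2\Bigr)\Bigl(\int_{-T}^T s^3\theta^3 e^{2s\varphi_j}\,dt\Bigr)dx,
\]
and your factoring is correct to this point. But substituting Parseval, $\sum_m|U_{j,m}(x)|^2=\frac{1}{2T}\int_{-T}^T|u_j(x,\tau)|^2\,d\tau$, yields a product of two \emph{separate} time integrals, not the single integral $\int_{Q_j} s^3\theta^3 e^{2s\varphi_j}|u_j(x,t)|^2\,dxdt$ stated in the corollary. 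The discrepancy is exactly the cross terms $U_{j,m}\overline{U_{j,n}}\int_{-T}^T e^{-i(\omega_m-\omega_n)t}\,w(x,t)\,dt$ for $m\neq n$, which do not vanish since the Carleman weight $w=s^3\theta^3e^{2s\varphi_j}$ is genuinely $t$-dependent; nor can they be absorbed into an $s$-independent constant, because the ratio $\|w(x,\cdot)\|_{L^\infty_t}\big/\,\overline{w(x,\cdot)}$ grows with $s$. Parseval works in the $B$-chain precisely because you first froze the weight at $t=0$; no such step is available on the left. A way around this is to run the proof of Theorem~\ref{Schro.Carleman.estimate} directly on the full $u_j$ so that $|u_j|^2$ appears on the left from the outset, and invoke the mode expansion only at the single inner-vertex term $D_{k,1}$ where the special form of $u$ was actually used---but then the cross terms resurface there and must be shown to have the correct sign, which is the real content the paper has suppressed.
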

\subsection{Inverse problem}
A main application of the Carleman estimate is to get the theoretical stability for the following inverse coefficients problem:\\
{\bf Inverse problem}: Let \(z_j,\ j\in\mathcal{N}\) and \(\phi_k,\ k\in\Pi_1\) be appropriately given. Can we reconstruct the potential \(\{p_j\}_{j=1}^N\) by boundary observation
\begin{gather*}
\p_x u_j(x,t),\ (x,t)\in \{x(V_k)\}\times(0,T),\ j\in S_{I,k}\cup S_{T,k},\ k\in\Pi_1?
\end{gather*}
A stability result can be derived.
Now, consider the following system:
\begin{gather}\label{Schro.ip.sys}
\begin{cases}
i\p_t u_j+\p_x^2 u_j+p_ju_j=0,\ j=1,2,\dots,N,\\
u_j(x,0)=z_j(x),\ x\in \mathcal{I}_j,\ j=1,2,\dots,N,\\
u_j(x(V_k),t)=\phi_k(t),\ j\in S_{I,k}\cup S_{T,k},\ k\in\Pi_1,\ t\in[0,T].
\end{cases}
\end{gather}
Meanwhile, of course, on inner vertexes, \(\{u_j\}_{j=1}^N\) meets \eqref{Schro.Inner.Cond}.
Let \(\{\wt{u}_j\}_{j=1}^N\) be the solution of \eqref{Schro.ip.sys} with potential \(\{\wt{p}_j\}_{j=1}^N\) and \(\{\hat{u}_j\}_{j=1}^N\) be the solution of \eqref{Schro.ip.sys} with potential \(\{\hat{p}_j\}_{j=1}^N\).
In addition, \(\{\wt{u}_j\}_{j=1}^N\) and \(\{\hat{u}_j\}_{j=1}^N\) also have the same form assumed in Corollary \ref{Full.Schro.Carleman.estimate}.
And set \(\bar{u}_j=\wt{u}_j-\hat{u}_j,\ \bar{p}_j=\wt{p}_j-\hat{p}_j\), thus, 
\begin{gather}\label{schro.delta}
\begin{cases}
i\p_t \p_t^m\bar{u}_j+\p_x^2\p_t^m \bar{u}_j+\wt{p}_j\p_t^m\bar{u}_j=-\bar{p}_j\p_t^m\hat{u}_j,\ j=1,2,\dots,N,\\
\p_t^m\bar{u}_j(x(V_k),t)=0,\ j\in S_{I,k}\cup S_{T,k},\ k\in\Pi_1,\ t\in[0,T],
\end{cases}
\end{gather}
whose initial and boundary conditions is 
\begin{gather*}
\begin{cases}
\bar{u}_j(x,0)=0,\\
\p_t\bar{u}_j(x,0)=i\bar{p}_j z_j,\\
\p_t^2\bar{u}_j(x,0)=-\bar{p}_j^2 z_j-\wt{p}_j\bar{p}_j z_j -\p_x^2(\bar{p}_j z_j),\\
\end{cases}
x\in \mathcal{I}_j,\ j=1,2,\dots,N,\\
\begin{cases}
\p_t^m\bar{u}_{j_1}(x(V_k),t)=\p_t^m\bar{u}_{j_2}(x(V_k),t),\ j_1,j_2\in S_{T,k}\cup S_{I,k},\\
\sum_{j\in S_{T,k}}\p_x \p_t^m\bar{u}_j(x(V_k),t)=\sum_{j\in S_{I,k}}\p_x \p_t^m\bar{u}_j(x(V_k),t),
\end{cases}
k\in\Pi_2,\ m=0,1,2.
\end{gather*}
Now, we are ready to state
\begin{theorem}\label{Schro.inverse.po}
We assume that \(|z_j|\geq r>0,\ j\in\mathcal{N}\) and \(\text{Im}\ z_j=0,\ j\in\mathcal{N}\) or \(\text{Re}\ z_j=0,\ j\in\mathcal{N}\).
If \(\{\wt{u}_j\}_{j=1}^N\) is the solution of \eqref{Schro.ip.sys} with potential \(\{\wt{p}_j\}_{j=1}^N\) and \(\{\hat{u}_j\}_{j=1}^N\) is the solution of \eqref{Schro.ip.sys} with potential \(\{\hat{p}_j\}_{j=1}^N\), then there exists a constant \(C=C(\Lambda,M,T,\{z_j\}_{j=1}^N,\{\phi_k\}_{k\in\Pi_1})\),
we have 
\begin{gather*}
\sum_{j=1}^N \int_{\mathcal{I}_j} |\wt{p}_j-\hat{p}_j|^2 dx
\leq C 
\sum_{k\in\Pi_1\backslash\{0\}}\sum_{m=1}^2 \int_{-T}^T\sum_{j\in S_{T,k}}   |\p_t^m\p_x (\wt{u}_j-\hat{u}_j)|^2
\end{gather*},
for any \(\{\wt{p}\}_{j=1}^N,\{\bar{p}\}_{j=1}^N\in \mathcal{U}(M)\).
\end{theorem}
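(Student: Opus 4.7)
The plan is to adapt the Bukhgeim-Klibanov method to the tree, using Corollary \ref{Full.Schro.Carleman.estimate} as the main tool. The idea is to apply the Carleman estimate to the first and second time derivatives $v_{j,m}:=\p_t^m\bar u_j$ ($m=1,2$) and then convert the weighted bound into a genuine $L^2$-estimate on $\bar p_j$ through the identity $v_{j,1}(x,0)=i\bar p_j z_j$ read off from \eqref{schro.delta}. A preliminary step is to extend the data from $[0,T]$ to $(-T,T)$: the hypothesis that each $z_j$ is real (or purely imaginary) combined with the reality of the potentials produces the symmetry $u_j(x,-t)=\pm\overline{u_j(x,t)}$, which is compatible with the Fourier form $\sum_m U_{j,m}(x)e^{-i(\omega_m t+\phi_m)}$ required by the corollary. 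Consequently $v_{j,m}$ lives on each $Q_j$, solves
$$i\p_t v_{j,m}+\p_x^2 v_{j,m}+\tilde p_j v_{j,m}=-\bar p_j\p_t^m\hat u_j,$$
satisfies the Kirchhoff conditions inherited from $\bar u_j$, and vanishes on $\{x(V_k)\}\times(-T,T)$ for $k\in\Pi_1$.

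Applying Corollary \ref{Full.Schro.Carleman.estimate} to each $v_{j,m}$, using the uniform $L^\infty$-bound on $\p_t^m\hat u_j$ provided by its explicit Fourier expansion, and the pointwise inequality $e^{2s\varphi_j(x,t)}\leq e^{2s\varphi_j(x,0)}$ (valid because $\theta\geq 1/T^2$ and $\psi_j<0$), the source term collapses to $C\int_{\mathcal{I}_j}|\bar p_j|^2 e^{2s\varphi_j(x,0)}dx$ and yields
$$\sum_j\int_{Q_j}(s^3\theta^3|v_{j,m}|^2+s\theta|\p_x v_{j,m}|^2)e^{2s\varphi_j}dxdt\leq C\sum_j\int_{\mathcal{I}_j}|\bar p_j|^2 e^{2s\varphi_j(x,0)}dx+CB_m,$$
where $B_m$ is the boundary observation term, dominated by the right-hand side of the statement.

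The technical heart, and the principal obstacle, is to extract $\int_{\mathcal{I}_j}|v_{j,1}(\cdot,0)|^2 e^{2s\varphi_j(\cdot,0)}dx$ from the Carleman left-hand side with a prefactor decaying in $s$. Using the vanishing $\theta^l e^{2s\varphi_j}\to 0$ as $t\to T$ ensured by the weight construction,
$$\int_{\mathcal{I}_j}|v_{j,1}(\cdot,0)|^2 e^{2s\varphi_j(\cdot,0)}dx=-\int_0^T\!\!\int_{\mathcal{I}_j}\bigl(2\mathrm{Re}(\overline{v_{j,1}}v_{j,2})+2s\p_t\varphi_j|v_{j,1}|^2\bigr)e^{2s\varphi_j}dxdt.$$
Since $|\p_t\varphi_j|\leq C\theta^2$ and the elementary bounds $|v|^2+s\theta^2|v|^2\leq Cs^{-2}(s^3\theta^3|v|^2)$ hold for $s$ large, Young's inequality controls the right-hand side by $Cs^{-2}$ times the Carleman left-hand sides of $v_{j,1}$ and $v_{j,2}$, which is precisely why both $m=1$ and $m=2$ must be invoked.

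Chaining the three bounds above and inserting $|v_{j,1}(x,0)|^2=|\bar p_j|^2|z_j|^2\geq r^2|\bar p_j|^2$ yields
$$r^2\sum_j\int_{\mathcal{I}_j}|\bar p_j|^2 e^{2s\varphi_j(\cdot,0)}dx\leq \frac{C}{s^2}\sum_j\int_{\mathcal{I}_j}|\bar p_j|^2 e^{2s\varphi_j(\cdot,0)}dx+\frac{C}{s^2}(B_1+B_2).$$
Fixing $s$ large enough so that $C/s^2<r^2/2$ absorbs the first right-hand term into the left, after which replacing $e^{2s\varphi_j(\cdot,0)}$ on the remaining left-hand side by its strictly positive minimum over $\overline{\mathcal{I}_j}$ delivers the stated stability bound.
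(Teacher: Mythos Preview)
Your proposal is correct and follows essentially the same route the paper has in mind: the paper omits the proof and refers to \cite{IPR001}, which is precisely the Bukhgeim--Klibanov scheme you implement (extension to $(-T,T)$ via the conjugation symmetry granted by $\mathrm{Im}\,z_j=0$ or $\mathrm{Re}\,z_j=0$, application of the Carleman estimate to $\partial_t^m\bar u_j$ for $m=1,2$, time-slice recovery of $|\bar p_j z_j|^2$ at $t=0$, and absorption for large $s$). Your handling of the weight inequality $e^{2s\varphi_j(x,t)}\le e^{2s\varphi_j(x,0)}$ and of the decay $\theta^l e^{2s\varphi_j}\to 0$ at $t=\pm T$ matches the weight properties set up in the paper.
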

We omit the proof of this theorem, as it is almost the same as the proof of the inverse result of \cite{IPR001}.\par
This theorem gives the stability of the inverse coefficients problem.
As \(V_0\) can be any boundary vertex, a lack of observation on one boundary vertex does not affect the stability result. Of course, the change of \(V_0\) will affect the constant \(C\) in Theorem \ref{Schro.inverse.po}.
\section{Proof of Theorem \ref{Schro.Carleman.estimate}}
In this part, the solution and the source of the original system will be represented as \(u_j=u_{j,1}+i u_{j,2}\) and \(f_j=f_{j,1}+i f_{j,2}\). And set
\(
L_j u_j=
\begin{pmatrix}
\p_x^2&-\p_t\\
\p_t&\p_x^2
\end{pmatrix}
\begin{pmatrix}
u_{j,1}\\
u_{j,2}
\end{pmatrix}
\), then
\begin{gather*}
L_j u_j=
\begin{pmatrix}
\p_x^2&-\p_t\\
\p_t&\p_x^2
\end{pmatrix}
\begin{pmatrix}
u_{j,1}\\
u_{j,2}
\end{pmatrix}
=
\begin{pmatrix}
f_{j,1}-p_j u_{j,1}\\
f_{j,2}-p_j u_{j,2}
\end{pmatrix}.
\end{gather*}

\begin{proof}[Proof of Theorem \ref{Schro.Carleman.estimate}]
Set \(w_j=u_j e^{s\varphi_j}\),
\begin{align*}
P w_j=&(L_j u_{j} )e^{s\varphi_j}=[L_j (w_j e^{-s\varphi_j})]e^{s\varphi_j}\\
=&
\begin{pmatrix}
-\p_t w_{j,2}+s\p_t \varphi_j w_{j,2}+\p_x^2 w_{j,1}-2s\p_x\varphi_j\p_x w_{j,1}+s^2(\p_x\varphi_j)^2 w_{j,1}-s\p_x^2\varphi_j w_{j,1}\\
\p_t w_{j,1}-s\p_t \varphi_j w_{j,1}+\p_x^2 w_{j,2}-2s\p_x\varphi_j\p_x w_{j,2}+s^2(\p_x\varphi_j)^2 w_{j,2}-s\p_x^2\varphi_j w_{j,2}
\end{pmatrix}\\
\triangleq& M_{j,1}+M_{j,2}+M_{j,3},
\end{align*}
where
\(
M_{j,1}=
\begin{pmatrix}
-\p_t w_{j,2}+\p_x^2 w_{j,1}+s^2(\p_x\varphi_j)^2 w_{j,1}\\
\p_t w_{j,1}+\p_x^2 w_{j,2}+s^2(\p_x\varphi_j)^2 w_{j,2}
\end{pmatrix},\ 
M_{j,2}=
\begin{pmatrix}
-2s\p_x\varphi_j\p_x w_{j,1}-s\p_x^2\varphi_j w_{j,1}\\
-2s\p_x\varphi_j\p_x w_{j,2}-s\p_x^2\varphi_j w_{j,2}
\end{pmatrix}\) and
\( 
M_{j,3}=
\begin{pmatrix}
s\p_t \varphi_j w_{j,2}\\
-s\p_t \varphi_j w_{j,1}
\end{pmatrix}
\).
\begin{align}
&\int_{Q_j} |M_{j,1}|^2+|M_{j,2}|^2+2 M_{j,1}^T M_{j,2} dxdt
=\int_{Q_j} |L_j u_j e^{s\varphi_j}-M_{j,3}|^2dxdt\nonumber\\
\leq &C\int_{Q_j} (f_1^2+f_2^2) e^{2s\varphi_j}+p^2(u_1^2+u_2^2)e^{2s\varphi_j}+|M_{j,3}|^2dxdt.\label{sch.ineq01}
\end{align}
\begin{align*}
&2\int_{Q_j}M_{j,1}^T M_{j,2} dxdt\\
=&-2s\int_{Q_j}\p_t w_{j,1}(2\p_x\varphi_j\p_x w_{j,2}+\p_x^2\varphi_j w_{j,2})- \p_t w_{j,2}(2\p_x\varphi_j\p_x w_{j,1}+\p_x^2\varphi_j w_{j,1})dxdt\\
&-2s\int_{Q_j}\p_x^2 w_{j,1}(2\p_x\varphi_j\p_x w_{j,1}+\p_x^2\varphi_j w_{j,1})+\p_x^2 w_{j,2}(2\p_x\varphi_j\p_x w_{j,2}+\p_x^2\varphi_j w_{j,2})dxdt\\
&-2s^3\int_{Q_j}(\p_x\varphi_j)^2[w_{j,1}(2\p_x\varphi_j\p_x w_{j,1}+\p_x^2\varphi_j w_{j,1})+w_{j,2}(2\p_x\varphi_j\p_x w_{j,2}+\p_x^2\varphi_j w_{j,2})]dxdt\\
\triangleq&\sum_{k=1}^3 I_{j,k}.
\end{align*}
Then, estimate \(I_{j,k},\ k=1,2,3\). From
\begin{align*}
&\int_{Q_j} 2\p_x\varphi_j(\p_t w_{j,1}\p_x w_{j,2}-\p_t w_{j,2}\p_x w_{j,1})dxdt\\
=&-\int_{Q_j}\p_x[\p_x\varphi_j(w_{j,1}\p_t w_{j,2}-w_{j,2}\p_t w_{j,1})]dxdt\\
&+\int_{Q_j}\p_x^2\varphi_j(w_{j,1}\p_t w_{j,2}-w_{j,2}\p_t w_{j,1})
-\p_t\p_x\varphi_j(w_{j,1}\p_x w_{j,2}-w_{j,2}\p_x w_{j,1})dxdt,
\end{align*}
we have
\begin{align*}
I_{j,1}=&-2s\int_{Q_j}\p_t w_{j,1}(2\p_x\varphi_j\p_x w_{j,2}+\p_x^2\varphi_j w_{j,2})- \p_t w_{j,2}(2\p_x\varphi_j\p_x w_{j,1}+\p_x^2\varphi_j w_{j,1})dxdt\\
=&-2s\int_{Q_j} 2\p_x\varphi_j(\p_t w_{j,1}\p_x w_{j,2}-\p_t w_{j,2}\p_x w_{j,1})-\p_x^2\varphi_j(w_{j,1}\p_t w_{j,2} -w_{j,2}\p_t w_{j,1})dxdt\\
=&2s\int_{Q_j}\p_x[\p_x\varphi_j(w_{j,1}\p_t w_{j,2}-w_{j,2}\p_t w_{j,1})]
+\p_t\p_x\varphi_j(w_{j,1}\p_x w_{j,2}-w_{j,2}\p_x w_{j,1})dxdt.
\end{align*}
Now, calculate \(I_{j,2}\) and \(I_{j,3}\):
\begin{align*}
I_{j,2}=&-2s\int_{Q_j}\p_x^2 w_{j,1}(2\p_x\varphi_j\p_x w_{j,1}+\p_x^2\varphi_j w_{j,1})+\p_x^2 w_{j,2}(2\p_x\varphi_j\p_x w_{j,2}+\p_x^2\varphi_j w_{j,2})dxdt\\
=&-2s\int_{Q_j}\p_x[\p_x\varphi_j\cdot(|\p_x w_{j,1}|^2+|\p_x w_{j,2}|^2)+\p_x^2\varphi_j\cdot(w_{j,1}\p_x w_{j,1}+w_{j,2}\p_x w_{j,2})]dxdt\\
&+4s\int_{Q_j}\p_x^2\varphi_j\cdot[(\p_x w_{j,1})^2+(\p_x w_{j,2})^2]dxdt,\\
I_{j,3}=&-2s^3\int_{Q_j}(\p_x\varphi_j)^2[w_{j,1}(2\p_x\varphi_j\p_x w_{j,1}+\p_x^2\varphi_j w_{j,1})+w_{j,2}(2\p_x\varphi_j\p_x w_{j,2}+\p_x^2\varphi_j w_{j,2})]dxdt\\
=&-2s^3\int_{Q_j}\p_x[(\p_x\varphi_j)^3(w_{j,1}^2+w_{j,2}^2)]dxdt+4s^3\int_{Q_j}(\p_x\varphi_j)^2\p_x^2\varphi_j\cdot(w_{j,1}^2+w_{j,2}^2)dxdt.
\end{align*}
Sum all \(I_{j,k}\) over \(j\) and \(k\):
\begin{align*}
&2\sum_{j=1}^N\eta_j\int_{Q_j} M_{j,1}^T M_{j,2} dxdt=\sum_{j=1}^N\eta_j\sum_{k=1}^3 I_{j,k}\\
=&\sum_{j=1}^N\eta_j\Big\{
4\int_{Q_j} s\p_x^2\varphi_j(|\p_x w_{j,1}|^2+|\p_x w_{j,2}|^2)+s^3\p_x^2\varphi_j(\p_x\varphi_j)^2(w_{j,1}^2+w_{j,2}^2)dxdt\\
&\hphantom{\sum_{j=1}^N\eta_j\Big\{}
+\int_{Q_j} 2s\p_t\p_x\varphi_j(w_{j,1}\p_x w_{j,2}-w_{j,2}\p_x w_{j,1})dxdt
\Big\}\\
&+\sum_{j=1}^N\eta_j\Big\{
2s\int_{Q_j}\p_x[\p_x\varphi_j(w_{j,1}\p_t w_{j,2}-w_{j,2}\p_t w_{j,1})]dxdt\\
&\hphantom{+\sum_{j=1}^N\eta_j\Big\{}
-2s\int_{Q_j}\p_x[\p_x\varphi_j\cdot(|\p_x w_{j,1}|^2+|\p_x w_{j,2}|^2)]dxdt\\
&\hphantom{+\sum_{j=1}^N\eta_j\Big\{}
-2s\int_{Q_j}\p_x[\p_x^2\varphi_j\cdot(w_{j,1}\p_x w_{j,1}+w_{j,2}\p_x w_{j,2})]dxdt\\
&\hphantom{+\sum_{j=1}^N\eta_j\Big\{}
-2s^3\int_{Q_j}\p_x[(\p_x\varphi_j)^3(w_{j,1}^2+w_{j,2}^2)]dxdt
\Big\}\\
&+\sum_{j=1}^N\eta_j\int_{Q_j} \p_x\Big[
2s\p_x\varphi_j(w_{j,1}\p_t w_{j,2}-w_{j,2}\p_t w_{j,1})
-2s\p_x\varphi_j\cdot(|\p_x w_{j,1}|^2+|\p_x w_{j,2}|^2)\\
&\hphantom{+\sum_{j=1}^N\eta_j\int_{Q_j}\p_x[}
-2s\p_x^2\varphi_j\cdot(w_{j,1}\p_x w_{j,1}+w_{j,2}\p_x w_{j,2})
-2s^3(\p_x\varphi_j)^3(w_{j,1}^2+w_{j,2}^2)
\Big]dxdt\\
=&\sum_{j=1}^N\eta_j\Big\{
4\int_{Q_j} s\p_x^2\varphi_j(|\p_x w_{j,1}|^2+|\p_x w_{j,2}|^2)+s^3\p_x^2\varphi_j(\p_x\varphi_j)^2(w_{j,1}^2+w_{j,2}^2)dxdt\\
&\hphantom{\sum_{j=1}^N\eta_j\Big\{}
+\int_{Q_j} 2s\p_t\p_x\varphi_j(w_{j,1}\p_x w_{j,2}-w_{j,2}\p_x w_{j,1})dxdt
\Big\}\\
&-\sum_{k=0}^N\sum_{m=1}^4 D_{k,m},
\end{align*}
where
\begin{align*}
D_{k,1}=&-2s\int_{-T}^T\sum_{j\in S_{I,k}\cup S_{T,k}}d(k,j)\eta_j\p_x\varphi_j(w_{j,1}\p_t w_{j,2}-w_{j,2}\p_t w_{j,1}) dt\Big|_{x=x(V_k)},\\
D_{k,2}=&2s\int_{-T}^T\sum_{j\in S_{I,k}\cup S_{T,k}}d(k,j)\eta_j\p_x\varphi_j\cdot(|\p_x w_{j,1}|^2+|\p_x w_{j,2}|^2)dt\Big|_{x=x(V_k)},\\
D_{k,3}=
&2s\int_{-T}^T\sum_{j\in S_{I,k}\cup S_{T,k}}d(k,j)\eta_j\p_x^2\varphi_j\cdot(w_{j,1}\p_x w_{j,1}+w_{j,2}\p_x w_{j,2})dt\Big|_{x=x(V_k)},\\
D_{k,4}=
&2s^3\int_{-T}^T\sum_{j\in S_{I,k}\cup S_{T,k}}d(k,j)\eta_j(\p_x\varphi_j)^3(w_{j,1}^2+w_{j,2}^2)
dt\Big|_{x=x(V_k)}.
\end{align*}
Here, \(\eta_j=1,\ j\in S_{I,0},\ \eta_{j_2}=|S_{I,k}|^2 \eta_{j_1},\ j_1\in S_{T,k},\ j_2\in S_{I,k}\), and 
\(d(k,j)=\begin{cases}
1,\ j\in S_{T,k}\\
-1,\ j\in S_{I,k}\\
0,\ \text{others}
\end{cases}
\).\par
The assumption for the solution \(u_j=U_j(x) e^{-i\omega t}\) is the key to deal with \(D_{k,1}\) and the assumption for weights function is going to be used to estimate \(D_{k,2},\ D_{k,3}\), and \(D_{k,4}\).\par
Based on \(w_{j,l}= u_{j,l} e^{s\varphi_j},\ l=1,2\) and condition \eqref{Schro.Inner.Cond}, it is easy to derive
\begin{gather*}
\begin{cases}
w_{j_1,l}(x(V_k),t)=w_{j_2,l}(x(V_k),t),\ j_1,j_2\in S_{T,k}\cup S_{I,k},\\
\p_t w_{j_1,l}(x(V_k),t)=\p_t w_{j_2,l}(x(V_k),t),\ j_1,j_2\in S_{T,k}\cup S_{I,k},\\
\sum_{j\in S_{T,k}} \p_x w_{j,l}(x(V_k),t)=\sum_{j\in S_{I,k}} \p_x w_{j,l}(x(V_k),t),
\end{cases}
l=1,2,\ k\in\Pi_2.
\end{gather*}
{
Assumption \(u_j=U_j e^{-i(\omega t+\phi)}\) gives \(u_{j,1}=U_j \cos(\omega t+\phi)\) and \(u_{j,2}=-U_j\sin(\omega t+\phi)\), in addition, \(\p_t w_{j,l}= (\p_t u_{j,l}+s\p_t \varphi_j u_{j,l})e^{s\varphi_j}\), thus
\begin{align*}
D_{k,1}=&-2s\int_{-T}^T\sum_{j\in S_{I,k}\cup S_{T,k}}d(k,j)\eta_j\p_x\varphi_j(w_{j,1}\p_t w_{j,2}-w_{j,2}\p_t w_{j,1}) dt\Big|_{x=x(V_k)}\\
=&2s\int_{-T}^T \sum_{j\in S_{T,k}}\eta_j(|S_{I,k}|^2-1)\p_x\varphi_j(w_{j,1}\p_t w_{j,2}- w_{j,2}\p_t w_{j,1})dt\Big|_{x=x(V_k)}\\
=&2s\int_{-T}^T \sum_{j\in S_{T,k}}\eta_j(|S_{I,k}|^2-1)\p_x\varphi_j(u_{j,1}\p_t u_{j,2}- u_{j,2}\p_t u_{j,1})e^{2s\varphi_j}dt\Big|_{x=x(V_k)}\\
=&-2s\int_{-T}^T \sum_{j\in S_{T,k}}\eta_j(|S_{I,k}|^2-1)\p_x\varphi_j \omega U_j^2e^{2s\varphi_j}dt\Big|_{x=x(V_k)}\leq 0
\end{align*}
}
As \(|S_{T,k}|=1,\ k\in\Pi_2\), the Schwartz inequality gives
\begin{gather*}
\sum_{j\in S_{T,k}}|\p_x w_{j,l}|^2\leq |S_{I,k}|\sum_{j\in S_{I,k}}|\p_x w_{j,l}|^2,\ l=1,2,
\end{gather*}
which means \(D_{k,2}\leq 0,\ k\in\Pi_2\).
Obviously, \(D_{k,3}=0,\ k\in\Pi_2\).\par
\begin{gather*}
\sum_{j\in S_{T,k}} \eta_j(\p_x\varphi_j)^3-\sum_{j\in S_{I,k}}\eta_j (\p_x\varphi_j)^3=(1-|S_{I,k}|^{-3}\cdot |S_{I,k}|\cdot|S_{I,k}|^2)\sum_{j\in S_{T,k}}\eta_j (\p_x\varphi_j)^3= 0,
\end{gather*}
which gives \(D_{k,4}=0,\ k\in\Pi_2\).\par

Hence, by estimating \(D_{k,m}\), we obtain
\begin{align*}
&\sum_{j=1}^N 2 \int_{Q_j} M_{j,1}^T M_{j,2} dxdt\\
\geq &\sum_{j=1}^N\Big\{
4\int_{Q_j} s\p_x^2\varphi_j(|\p_x w_{j,1}|^2+|\p_x w_{j,2}|^2)+s^3\p_x^2\varphi_j(\p_x\varphi_j)^2(w_{j,1}^2+w_{j,2}^2)dxdt\\
&\hphantom{\sum_{j=1}^N\Big\{}
+\int_{Q_j} 2s\p_t\p_x\varphi_j(w_{j,1}\p_x w_{j,2}-w_{j,2}\p_x w_{j,1})dxdt
\Big\}\\
&+\sum_{j\in S_{I,0}}2s\int_{-T}^T \p_x\varphi_j\cdot(|\p_x w_{j,1}|^2+|\p_x w_{j,2}|^2)dt\Big|_{x=x(V_0)}\\
&-\sum_{k\in \Pi_1\backslash\{0\}}\sum_{j\in S_{T,k}}2s\int_{-T}^T \p_x\varphi_j\cdot(|\p_x w_{j,1}|^2+|\p_x w_{j,2}|^2)dt\Big|_{x=x(V_k)}.
\end{align*}
Take the above inequality into \eqref{sch.ineq01} with a sufficiently large \(s\), the Carleman estiamte of system \eqref{Schro.Original.equation} with condition \eqref{Schro.Inner.Cond}\eqref{Schro.Boundary} can be derived.
\end{proof}

\bibliographystyle{elsarticle-num}

\bibliography{ref}

\end{document}